\documentclass[a4paper]{amsart}

\usepackage{amssymb}
\usepackage{amsmath,amsthm}
\usepackage{cite}

\usepackage{enumerate,paralist}
\usepackage{amstext}
\usepackage{psfrag}
\usepackage{dsfont}
\usepackage[dvips]{epsfig}
\usepackage[english]{babel}
\usepackage{url}
\usepackage{graphicx}
\usepackage{dcolumn}
\usepackage{bm}
\usepackage{float}
\usepackage{hyperref}
\usepackage{color}
\usepackage{epstopdf}
\usepackage{cleveref}
\usepackage[svgnames]{xcolor}
\hypersetup{hidelinks,colorlinks=true,allcolors=DarkBlue}

\numberwithin{equation}{section}

\theoremstyle{plain}
\newtheorem{theorem}{Theorem}

\theoremstyle{definition}
\newtheorem{definition}{Definition}

\DeclareMathOperator{\sign}{sign}

\def\text#1{\mbox{\rm #1\,}}

\begin{document}
\frenchspacing

\title[Damping of a system of linear oscillators]
{Damping of a system of linear oscillators \\ using the generalized dry friction}

\author{Alexander Ovseevich}
\address
{
Institute for Problems in Mechanics, Russian Academy of Sciences \\
119526, Vernadsky av., 101/1, Moscow, Russia } \email{ovseev@ipmnet.ru}

\author{Aleksey Fedorov}
\address
{
Institute for Problems in Mechanics, Russian Academy of Sciences \\
119526, Vernadsky av., 101/1, Moscow, Russia \& Bauman Moscow State Technical University, 105005, 2nd Baumanskaya str.,
5, Moscow, Russia } \email{akf@rqc.ru}

\maketitle

\begin{flushright}
    {\it  Andrei I. Subbotin \\ in memoriam}
\end{flushright}

\begin{abstract}
The problem of damping a system of linear oscillators is considered. 
The problem is solved by using a control in the form of  dry friction. 
The motion of the system under the control is governed by a system of differential equations with discontinuous right-hand side. 
A uniqueness and continuity theorem is proved for the phase flow of this system. 
Thus, the control in the form of generalized dry friction defines the motion of the system of oscillators uniquely.

\medskip\noindent
\textsc{Keywords} optimal control, DiPerna--Lions theory, singular ODE.

\medskip\noindent
\textsc{MSC 2010:} 93B52, 34H05, 34A36.

\end{abstract}

\section*{Introduction}

The present work is closely related to the talk of the one of the authors (A.I.O.) at the international seminar
 ``Control Theory and Theory of Generalized Solutions of Hamilton--Jacobi Equations'' (CGS'2015) dedicated to the memory of A.I. Subbotin. 
The subject of this work correlates with  one of the central topics of work of Andrei Izmailovich: 
How to define the solution of a problem so that the whole corresponding theory takes an attractive and ultimate form.

Of course, Andrei Izmailovich and we deal with substantially different problems. 
Our work is devoted not to solution of nonlinear partial differential equations but to ordinary differential equations (ODEs) and related linear transport equations. 
Nevertheless, the basic idea of the work consists in obtaining the final existence and uniqueness of the motion for a very specific system of differential equations 
with discontinuous right-hand side (RHS) on the basis of a nonconventional concept of a solution.

In the  case under consideration, the system arises in an attempt to control in a quasioptimal way a system of an arbitrary number of linear oscillators 
by using a feedback control in the form of a generalized dry friction \cite{ovseev0,ovseev1,ovseev2}, 
and the adopted concept of solution is suggested by the classical work by R. DiPerna and P. Lions \cite{diperna}.

\section{Problem statement and background}

As it is well-known, by using the maximum principle one can explicitly construct a feedback control  for the minimum-time damping of a single linear oscillator \cite{pont}. 
A direct generalization of such task is the problem of damping a system consisting of an arbitrary number $N$ of linear oscillators with different eigenfrequencies $\omega_i$:
\begin{equation}\label{syst1}
	\dot{x}={A}x+{B}u,
	\qquad
	x\in\mathbb{V}=\mathbb{R}^{2N},
	\qquad
	u\in\mathbb{U}=\mathbb{R},
	\qquad |u|\leq1,
\end{equation}
where the matrix $A$ and the vector $B$ have the form
\begin{equation}\label{syst2}
	{A_i}=
	\left({\begin{array}{*{20}c}
	0   &   1  \\
	{ - \omega_i^{2}} & 0 \\
	\end{array}}\right), \qquad
	A={\rm diag}(A_i), \qquad
	{B_i} = \left( \begin{gathered}
	0 \hfill \\
	1 \hfill \\
	\end{gathered}\right), \qquad
	B=\oplus B_i.
\end{equation}
In the natural coordinates $(x_i,y_i),\,i=1,\dots,N$, the system can be written as follows:
$$
\begin{array}{l}
	\dot{x_i}=y_i, \\
	\dot{y_i}=-\omega_i^{2}{x_i}+{u}, \quad |u|\leq1, \quad i=1,\dots,N.
\end{array}
$$

Asymptotical properties of the system are  determined primarily by the presence or absence of resonances, i.e., non-trivial relations between eigenfrequencies of the form
\begin{equation}\label{reson}
	\sum_{i=1}^Nm_i\omega_i=0,\, \mbox{ ãäå } 0\neq m=(m_1,\dots,m_N)\in {\mathbb{Z}}^N.
\end{equation}
The Kalman controllability criterion \cite{kalman} for system (\ref{syst1})--(\ref{syst2}) reduces to the distinction of the eigenfrequencies: $\omega_i\neq\omega_j$ at $i\neq j$ and it is, 
of course, substantially weaker than the condition of absence of resonances.

The minimum-time problem for system (\ref{syst1})--(\ref{syst2}) can be reduced to the boundary-value problem of the Pontryagin maximum principle
\begin{equation}\label{max}
\begin{array}{l}
	\dot{x}={A}x+{B}u, \quad \dot{p}=-{A^*}p,\\[.5em]
	u={\rm sign}\langle{B,p}\rangle, \quad x(0)=x_0, \quad x(T)=0, \quad h(x,p)=0,
\end{array}
\end{equation}
with respect to the Hamiltonian
$$
	h(x,p)=\langle{Ax,p}\rangle+|\langle{B,p}\rangle|-1=\max_{|u|\leq1}\{\langle{Ax,p}\rangle+\langle{Bu,p}\rangle-1\},
$$
where angle brackets $\langle{.,.}\rangle$ stand for the standard scalar multiplication in $\mathbb{R}^{2N}$, 
$|\cdot|$ is the Euclidean norm, 
and the maximum is taken over the interval $\{u\in\mathbb{R}:|u|\leq1\}$.

We note that system (\ref{max}) is the Hamiltonian one with $2N$ degrees of freedom and $N+1$ integrals of motion. These
integrals are the Hamiltonian $h$ and energies
$$
	I_i=\frac12(\eta_i^2+{\omega_i^{-2}}{\xi_i^{2}}),\quad i=1,\dots,N
$$
of normal oscillations of the vector $p$ written in the from $p_i=(\xi_i,\eta_i)$, where the variables $\xi_i$ and $\eta_i$ are dual to $x_i$ and $y_i$, respectively. 
From the point of view of the canonical system of the maximum principle (\ref{max}), the problem of the optimal damping of a single linear oscillator is completely integrable, 
because in this case the number $N+1$ of integrals of motions is equal to the number $2N$ of degrees of freedom. 
Note that the similar identity is the basic assumption of the Liouville--Arnold theorem on the complete integrability of Hamiltonian systems \cite{Arnold}. 
In contrast, problem (\ref{syst1})--(\ref{syst2}) is probably not completely integrable, 
so that an analytic design of the optimal control by using methods based on the Pontryagin maximum principle is hardly possible.

The general problem to be solved is to construct a nonoptimal feedback control steering the system to equilibrium. 
An approach to design of an asymptotically optimal feedback control for system (\ref{syst1})--(\ref{syst2}) was suggested in \cite{ovseev0,ovseev1}. 
When using methods of \cite{ovseev0,ovseev1}, 
the ratio of the motion time to zero under the suggested control to the minimum time is close to one provided that the initial energy of the system is sufficiently large.

Within the framework of the present work, 
we confine ourselves with issues related to a construction of the control in the generalized dry friction form and to study of the motion of the system under this control. 
We describe an approach to design of the control, 
study systems of differential equations with a discontinuous right-hand side related to the motion of the system, 
and show that the motion can be defined uniquely. 
In terms of Refs. \cite{ovseev0,ovseev1}, this gives a description of the dynamics of the system under the asymptotically optimal control within the high-energy regions of the phase space. 
In the present work, we provide proofs of existence and uniqueness theorems for arising deferential equations with discontinuous RHS. 
Investigations of asymptotic properties and detailed description of the control can be found in \cite{ovseev1,ovseev2}

It should be noted that substantially different approaches to construction of the feedback control for linear systems are known as well,
for example, the methods based on the Kalman approach to the feedforward control \cite{chern,ovseev+}. 
Papers \cite{chern,ovseev+} include estimates for the motion time under this control. 
The time is comparable with the optimal one: the ratio of the motion time to zero under this control to the minimum one is bounded.

\section{Control for a system of oscillators}

A well-known geometric interpretation of the Hamilton--Jacobi--Bellman equation for the minimum-time problem is that the momentum vector ${\partial T}/{\partial x}$ 
at a point $x$ is an inner normal to the reachable set $\mathcal{D}(T(x))$,
where $T(x)$ is the optimal time for the controllable system.

\begin{definition}
	A reachable set $\mathcal{D}(T)$ is the set of ends of admissible trajectories of a controllable system from zero and parametrized by the time interval $[0.T]$.
\end{definition}

Optimal control has the form
\begin{equation}\label{control_o}
	u(x)=-{\rm sign}\langle B,p(x) \rangle, \qquad p=\frac{\partial T}{\partial x}(x),
\end{equation}
where $p$ is the outer normal to the reachable set $\mathcal{D}(T(x))$ form zero, whose boundary passes through $x$.

The control in the dry friction form
\begin{equation}\label{control0}
	u=-{\rm sign\,}{\sum_{i=1}^{N}\lambda_iy_i},
\end{equation}
where $\lambda_i$ are some positive coefficients, 
depending on positions and velocities, arises when an asymptotic approximation is substituted for the exact reachable set $\mathcal{D}(T(x))$ of system (\ref{syst1})--(\ref{syst2}).
According to the asymptotic theory of reachable sets \cite{ovseev3,ovseev4,ovseev5}, as time $T\to\infty$ $T\Omega$,
where $\Omega$ is a fixed convex body, serves as a good approximation to $\mathcal{D}(T)$. 
The body $\Omega$ can be uniquely defined by its support function.

\begin{definition}
	The support function ${H}_{M}(\xi)$ of a closed convex set $M$ has the form
	\begin{equation}\label{SF}
		{H}_{M}(\xi)=\sup_{x\in M}\langle{\xi, x\rangle}
	\end{equation}
	and defines the set $M$ uniquely \cite{schneider}.
\end{definition}

More precisely, for the system (\ref{syst1})--(\ref{syst2}) under consideration.
\begin{theorem}\label{support0}
	Suppose that a momentum $p$ is written in the form $p=(\xi_i,\eta_i)$,
	where
	$\xi_i$ is the dual variable for $x_i$,
	$\eta_i$ is the dual variable for $y_i$, and
	$z_i=(\eta_i^2+{\omega_i^{-2}}{\xi_i^{2}})^{1/2}$.
	In the nonresonant case, when there are no nontrivial relations between eigenfrequencies of the form (\ref{reson}),
	the support function $H_T$ of the reachable set $\mathcal{D}(T)$ has as $T\to\infty$ the asymptotic form
	\begin{equation}\label{approxN0}
		{H}_T(p)=T\int\limits_{\mathcal{T}}\left|\sum_{i=1}^N z_{i}\cos\varphi_{i}\right|d\varphi+o(T)=	T\mathfrak{H}(z)+o(T),
	\end{equation}
	and the support function of the convex body $\Omega$ is given by the main term $\mathfrak{H}(z)$.
\end{theorem}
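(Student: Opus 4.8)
The plan is to compute the support function $H_T$ of $\mathcal{D}(T)$ in closed form via the variation-of-constants formula, recognize it as a time average of a fixed almost periodic function along a linear orbit on an $N$-torus, and then apply Weyl's equidistribution theorem, whose hypothesis is exactly the nonresonance condition~\myref{reson}.

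First I would note that a trajectory of~\myref{syst1} issued from the origin satisfies $x(T)=\int_0^T e^{(T-s)A}Bu(s)\,ds$, so that maximizing $\langle p,x(T)\rangle$ over admissible controls, pointwise in $s$ with the optimal choice $u(s)=\sign\langle B,e^{(T-s)A^{*}}p\rangle$, gives
\[
H_T(p)=\int_0^T\bigl|\langle B,e^{sA^{*}}p\rangle\bigr|\,ds .
\]
Since $A^{*}$ is block diagonal with blocks $A_i^{*}$ generating the rotation of frequency $\omega_i$, a direct integration of $\dot q=A_i^{*}q$ shows that, writing $p_i=(\xi_i,\eta_i)$,
\[
\langle B,e^{sA^{*}}p\rangle=\sum_{i=1}^{N}\Bigl(\eta_i\cos\omega_i s+\frac{\xi_i}{\omega_i}\sin\omega_i s\Bigr)=\sum_{i=1}^{N}z_i\cos(\omega_i s-\psi_i),
\]
where $z_i=(\eta_i^2+\omega_i^{-2}\xi_i^2)^{1/2}$ and $\psi_i$ is a phase determined by $(\xi_i,\eta_i)$. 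Thus $H_T(p)=\int_0^T F(s)\,ds$, where $F$ is the continuous almost periodic function $F(s)=|\sum_{i}z_i\cos(\omega_i s-\psi_i)|$.

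The core step is to evaluate the mean value of $F$. Consider the linear flow $s\mapsto(\omega_1 s-\psi_1,\dots,\omega_N s-\psi_N)$ on the torus $\mathcal{T}=(\mathbb{R}/2\pi\mathbb{Z})^{N}$. By Weyl's theorem this orbit is equidistributed with respect to the normalized Haar measure $d\varphi$ precisely when $\omega_1,\dots,\omega_N$ are linearly independent over $\mathbb{Q}$, that is, when no nontrivial relation~\myref{reson} holds. Applying this to the continuous function $\varphi\mapsto|\sum_{i}z_i\cos\varphi_i|$ yields
\[
\frac1T\int_0^T F(s)\,ds\ \longrightarrow\ \int_{\mathcal{T}}\Bigl|\sum_{i=1}^{N}z_i\cos\varphi_i\Bigr|\,d\varphi=\mathfrak{H}(z),\qquad T\to\infty,
\]
which is~\myref{approxN0} with remainder $o(T)$; the limit does not depend on the phases $\psi_i$, since a translation of $\mathcal{T}$ preserves $d\varphi$. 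Finally, each $H_T/T$ is the support function of $\mathcal{D}(T)/T$; as $T\to\infty$ these converge pointwise to $\mathfrak{H}(z(\cdot))$, which, being finite and continuous in $p$ and a pointwise limit of convex positively homogeneous functions, is itself convex and positively homogeneous, hence the support function of a unique compact convex set $\Omega$. Since the $z_i$ are nonnegative and not all zero for $p\neq 0$, one has $\mathfrak{H}(z)>0$ there, so $\Omega$ is a genuine convex body.

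I expect the main obstacle to be the equidistribution step together with its bookkeeping: carrying out the matrix exponential and keeping the phases straight, and, above all, verifying that the Weyl (equivalently, ergodic) hypothesis for the linear flow on $\mathcal{T}$ coincides verbatim with the absence of resonances~\myref{reson}. The elementary Weyl theorem delivers only the qualitative remainder $o(T)$ claimed in the statement; a quantitative rate would require a Diophantine condition on $(\omega_i)$ and is not needed here. The remaining ingredients — the variation-of-constants formula, the pointwise maximization over $u$, and the support-function bookkeeping — are routine.
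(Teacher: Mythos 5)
Your proposal follows essentially the same route as the paper: the Cauchy (variation-of-constants) formula, pointwise maximization over $u$ yielding $H_T(p)=\int_0^T|\langle B,e^{sA^*}p\rangle|\,ds$, the reduction to a linear winding of the torus $\mathcal{T}$, and the time-average-equals-space-average step under the nonresonance hypothesis (the paper cites Arnold's ergodic theorem where you invoke Weyl equidistribution, but these are the same fact), followed by the phase-translation argument giving $\mathfrak{H}(z)$. Your closing remarks on convexity and positive homogeneity of the limit are a harmless addition beyond what the paper records, and the argument is correct.
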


\begin{proof}
By definition, the support function of the set $\mathcal{D}(T)$ has the form (\ref{SF}), 
where the supremum is taken over admissible controls and $x(T)$ is the state at time $T$ of the control system (\ref{syst1})--(\ref{syst2}) such that $x(0)=0$.

By using the Cauchy formula, we conclude that
$$
	\langle x(T),p\rangle=\int_0^T \langle e^{A(T-t)}Bu(t),p\rangle dt=\int_0^T u(t)B^*e^{A^*(T-t)}p dt.
$$
Taking the supremum under the integral sign and the change  of variables $t\mapsto T-t$, we obtain
\begin{equation}\label{proof1}
    H_{\mathcal{D}(T)}(p)=\int_0^T \sup_{|u(t)|\leq1} u(t)B^*e^{A^*(T-t)}p dt=\int_0^T |B^*e^{A^*t}p| dt.
\end{equation}
In the dual coordinates $\xi_i,\eta_i$, formula (\ref{proof1}) takes the form
$$
	H_{\mathcal{D}(T)}(p)=\int_0^T \left|\sum_{i=1}^N\eta_i\cos\omega_it+\omega_i^{-1}\xi_i\sin\omega_it\right|dt.
$$
We note that this expression represents an integral of the function
$$
	f(\varphi)=\left|\sum_{i=1}^N\eta_i\cos\varphi_i+\omega_i^{-1}\xi_i\sin\varphi_i\right|
$$
taken over the rectilinear winding $\varphi_i(t)=\omega_it$ of the torus $\mathcal{T}=(\mathbb{R}/2\pi\mathbb{Z})^N$ with angular coordinates $\varphi_i$.

Suppose that there are no resonances in the system, i.e., 
there are no nontrivial relation between eigenfrequencies of the form (\ref{reson}). 
Then the time average coincides with the space average \cite{Arnold}:
$$
	\lim\limits_{T\to\infty}{\frac1T\int_0^T f(\varphi(t))dt}=\int_{\mathcal{T}}f(\varphi)d\varphi
$$

We note that 
$$
	\eta_i\cos\varphi_i+\omega_i^{-1}\xi_i\sin\varphi_i=z_i\cos(\varphi_i+\alpha_i),
$$ 
where $\alpha=(\alpha_i)$ is a constant point of the torus. 
Therefore,
$$
	\int_{\mathcal{T}}f(\varphi)d\varphi=\int_{\mathcal{T}}f(\varphi-\alpha)d\varphi
	=\int_{\mathcal{T}}\left|\sum_{i=1}^Nz_i\cos\varphi_i\right|d\varphi.
$$
Thus, we obtain the statement of the theorem
\begin{equation}
	\lim\limits_{T\to\infty}\frac1T H_{\mathcal{D}(T)}(p)=\int_{\mathcal{T}}\left|\sum_{i=1}^Nz_i\cos\varphi_i\right|d\varphi.
\end{equation}
\end{proof}

Note that the theorem follows from the general theory describing the asymptotic behaviour of support function of reachable sets of linear systems developed in detail in \cite{ovseev4}; 
however the above proof is much simpler than the general theory.

The support function ${H}_\Omega(p)$ of the convex body $\Omega$ is the main term of the asymptotic expression in
(\ref{approxN0})
$$
	{H}_\Omega(p)=\mathfrak{H}(z)=\int\left|\sum_{i=1}^N z_{i}\cos\varphi_{i}\right|d\varphi, \mbox{ where } z=(z_1,\dots,z_N)\in{\mathbb R}^N.
$$
The vector $p$ is  normal to the boundary of $\partial\Omega$ at the point  ${\partial H_\Omega(p)}/{\partial p}$.
The normal vector to the approximate reachable set $\rho\Omega$, whose boundary passes through $x$, is defined by the
equation
\begin{equation}\label{approx30}
    \rho^{-1}x=\frac{\partial {H}_\Omega(p)}{\partial p}=\frac{\partial\mathfrak{H}(z)}{\partial z}\frac{\partial z}{\partial p},
\end{equation}
where $p\in\mathbb{R}^{2N}$ and $\rho>0$ are unknown. 
The function is differentiable, and Eq. (\ref{approx30}) has  unique solution, because the boundary of $\Omega$ is smooth \cite{ovseev5}. 
The strategy of the control design, which follows from Eq. (\ref{approx30}), can be applied in the resonant case as well, when the asymptotic expression (\ref{approxN0}) does not work; 
however in this case quasioptimal properties of the control are lost. 
The function $\rho=\rho(x)$ from Eq. (\ref{approx30}) plays the same role for the considered control as the optimal time $T(x)$ for the optimal control (\ref{control_o}). 
The momentum $p$ in (\ref{approx30}) has the form $p={\partial\rho}/{\partial x}$. 
The function $\rho(x)$ is the norm of the vector $x$ in the metric such that the body $\Omega$ is the unit ball.
This is a smooth function outside zero.

Although using  the control in the dry friction form (\ref{control0}) helps to damp oscillations, 
it does not necessarily lead to a complete stop of the system. 
Strictly speaking, standstill zones might appear, where the system is not moving at all, despite the fact that the equilibrium is not yet reached. 
The method suggested in \cite{ovseev0,ovseev1,ovseev2} combines several strategies of control applied successively at high, medium, and low energies. 
At high and intermediate energies, the scalar control in the form of the generalized dry friction is used (\ref{control0}). 
At low energies, a substantially different feedback control law, which is constructed by using common Lyapunov functions \cite{ovseev0}, is employed. 
In the present work, we are interested in the motion within regions of high and medium energies.

\section{Motion under the control}

Differential equation with discontinuous RHS occur naturally in optimal control theory. 
A conventional approach to the problem of existence of  solutions for such equations is based on the Filippov theory of differential inclusions \cite{filipp}. 
However, an intuitive concept of a controlled motion includes both an existence and a unique determination of the trajectories of the system by a control law. 
The corresponding uniqueness issue is generally beyond the Filippov theory.

The control in the generalized dry friction form also leads to the motion of a system.
This motion is formally described by a differential equation with discontinuous RHS:
\begin{equation}\label{sing_sys}
	\dot x=Ax- B\,{\rm sign}\left\langle{B,\frac{\partial\rho}{\partial x}}\right\rangle,
	\qquad
	u(x)=-{\rm sign\,}\left\langle B,\frac{\partial\rho}{\partial x}\right\rangle.
\end{equation}
Here $u(x)$ is a multivalued function, because $\sign(0)$ is defined non-uniquely and can take any value in the interval $[-1,1]$. 
In fact, we deal with the differential inclusion.

As it turns out, a motion of the system under the control in the generalized dry friction form, which is described by the
differential inclusion (\ref{sing_sys}), can be defined uniquely. Toward this end  one can apply the DiPerna--Lions
theory of singular ODEs \cite{diperna}.

\subsection{DiPerna--Lions theory}

If $b(x)$ is a Lipschitz function, then the Cauchy problem for the ODE
\begin{equation}\label{ODE}
	\dot x=b(x),\quad x(0)=x_0
\end{equation}
and for the partial differential equation (transport equation)
\begin{equation}\label{PDE}
	\frac{\partial v}{\partial t}=\sum b_i(x)\frac{\partial v}{\partial x_i},
	\quad
	v(x,0)=v_0(x)
\end{equation}
are equivalent. 
The method of characteristics says that the solution $v$ of problem (\ref{PDE}) is given by the formula
\begin{equation}\label{flow}
	v(x,t)=v_0(\phi_t(x)),
\end{equation}
where $\phi_t$ is the phase flow for (\ref{ODE}).

In the paper of DiPerna and Lions \cite{diperna}, 
the Lipschitz condition $\partial{b}/\partial{x}\in{L}_\infty$ is substantially relaxed. 
Instead of it, the Lipschitz condition in an integral form $\partial{b}/\partial{x}\in{L}_1$ is imposed. 
It is shown that the solution of problem (\ref{PDE})  still exists, is unique, and is given by formula (\ref{flow}). 
Thereby, is was demonstrated that one can efficiently work with a differential equation whose RHS satisfies the Lipschitz condition in the integral sense rather than pointwise.

The DiPerna--Lions theory is based on the notion of renormalized solution.

\begin{definition}
	A weak bounded solution $v$ of the Cauchy problem (\ref{PDE}) is called renormalized solution,
	if for any smooth function $\beta:\mathbb{R}\to\mathbb{R}$ function $\beta(v)$ is again a weak solution.
\end{definition}

\subsection{Motion under the generalized dry friction control}

The idea behind the DiPerna--Lions theory is to construct a global phase flow, perhaps not everywhere uniquely defined, 
instead of a solution of an individual Cauchy problem for every initial condition. 
Our main result claims that in the phase space of system (\ref{sing_sys}) we can define a semiflow, 
which is continuous, uniquely defined everywhere, and gives a solution of the form (\ref{flow}) to the transport equation.

\begin{theorem}\label{motion2}
	There exists a continuous semiflow  $x\mapsto x(t)=\phi_t(x),\,t\geq0$ such that $v(x,t)=v(\phi_t(x))$ is the unique renormalized solution of the Cauchy problem for the transport equation
	\begin{equation}\label{transport}
		\frac{\partial v}{\partial t}=\left\langle Ax-B{\rm sign}\left\langle{B,\frac{\partial\rho}{\partial x}(x)}\right\rangle,\frac{\partial v}{\partial x}\right\rangle,\quad v(x,0)=v(x).
	\end{equation}
	Each curve $t\mapsto x(t)$ is absolutely continuous, and the differential inclusion (\ref{sing_sys}) holds.
\end{theorem}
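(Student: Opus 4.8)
The plan has three stages: (a) read off the geometry of the field in (\ref{sing_sys}) from Theorem~\ref{support0}; (b) verify the hypotheses of the DiPerna--Lions theory, in its $BV$ form, and obtain an almost-everywhere defined Lagrangian semiflow solving the transport equation; (c) upgrade it to an everywhere-defined continuous semiflow using the dissipativity of $\rho$.

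For stage (a) I would first note that the support function $H_\Omega(p)=\mathfrak H(z)$ of Theorem~\ref{support0} depends on $p$ only through $z_i=(\eta_i^2+\omega_i^{-2}\xi_i^2)^{1/2}$, which are first integrals of the adjoint free motion $\dot p=-A^{*}p$; hence $H_\Omega$ is invariant under the group $\{e^{tA^{*}}\}$, so the body $\Omega$ is invariant under the free flow $\{e^{tA}\}$ (support functions determine convex bodies), and its gauge $\rho$ is a first integral of $\dot x=Ax$, i.e.
$$
\left\langle Ax,\frac{\partial\rho}{\partial x}\right\rangle\equiv0 .
$$
Two consequences: along any solution of (\ref{sing_sys}) one has $\dot\rho=-\,\sign(g)\,g=-|g|\le0$, where $g(x):=\langle B,\partial\rho/\partial x(x)\rangle$, so $\rho$ is a Lyapunov function and forward trajectories remain in the compact sublevel sets $\{\rho\le R\}$; and, since $\rho$ is smooth, even, positively homogeneous of degree one with $\Omega=\{\rho\le1\}$ smooth and strictly convex, the switching function $g$ is smooth off the origin, odd, homogeneous of degree zero, with $\partial g/\partial x=(\partial^{2}\rho/\partial x^{2})B$ vanishing only where $x$ is parallel to $B$ --- and there $g=\pm\rho(B)\ne0$. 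Hence the switching set $\Sigma=\{g=0\}$ is a smooth conical hypersurface away from $0$, and the multiplier $s:=\sign(g)$, locally constant off $\Sigma$, belongs to $BV_{\mathrm{loc}}(\R^{2N})$ with gradient carried by $\Sigma$.

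For stage (b) the field $b(x):=Ax-Bs(x)$ then lies in $BV_{\mathrm{loc}}$, grows linearly, and --- using $\operatorname{tr}A=0$ --- has divergence equal to the measure on $\Sigma$ with density $-2\,B^{*}(\partial^{2}\rho/\partial x^{2})B\,/\,|\partial g/\partial x|$, which is \emph{non-positive}, because $\partial^{2}\rho/\partial x^{2}\ge0$ by convexity of $\Omega$. Equivalently, on $\Sigma$ the one-sided normal velocities obey $\langle b^{-},\partial g/\partial x\rangle-\langle b^{+},\partial g/\partial x\rangle=2B^{*}(\partial^{2}\rho/\partial x^{2})B\ge0$, which rules out the repelling sliding configuration $\langle b^{+},\partial g/\partial x\rangle>0>\langle b^{-},\partial g/\partial x\rangle$: characteristics may merge on $\Sigma$ but never split. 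Under these conditions the forward push-forward of Lebesgue measure stays absolutely continuous, the renormalization property holds, and the DiPerna--Lions/Ambrosio machinery produces an almost-everywhere defined regular Lagrangian semiflow $\phi_t$ of $\dot x=b(x)$ for which $v(x,t)=v(\phi_t(x))$ is the unique renormalized solution of (\ref{transport}); replacing $\sign$ by a smooth monotone $\sigma_\varepsilon$ (for which all these bounds hold uniformly) and using stability of renormalized solutions identifies $\phi_t$ with the limit of the classical flows $\phi^\varepsilon_t$.

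For stage (c): since $\Sigma$ is a smooth hypersurface across which the discontinuity is never repelling, Filippov's theory yields, through every initial point, an absolutely continuous forward solution of the differential inclusion (\ref{sing_sys}), forward uniqueness (only transversal crossing or attracting sliding along $\Sigma$ can occur), and continuous dependence on the initial datum; this furnishes the continuous semiflow $t\mapsto x(t)=\phi_t(x)$, $t\ge0$, which agrees almost everywhere with the regular Lagrangian one and therefore transports the renormalized solution of (\ref{transport}). The only point needing separate attention is the vertex $0$ of the cone $\Sigma$, where $\partial\rho/\partial x$, hence $g$, is discontinuous: there $b$ is merely bounded, but $\mathcal H^{2N-1}(\Sigma\cap B_r)=O(r^{2N-1})$ keeps $b\in BV_{\mathrm{loc}}$ while $\rho$ drives trajectories toward $0$, so no pathology arises. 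I expect the genuine obstacle to be precisely that $\div b$ is a singular measure rather than an $L^\infty$ density, so the textbook DiPerna--Lions/Ambrosio statements do not apply verbatim; what saves the argument is the one-sided structure $\partial^{2}\rho/\partial x^{2}\ge0$ inherited from convexity of the limit body $\Omega$ --- that characteristics can only coalesce, never branch, on the switching surface.
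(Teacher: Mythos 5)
Your proposal is geometrically sound but takes a genuinely different route from the paper, and it leaves open precisely the step you flag at the end. Your stage (a) is correct and matches observations the paper uses implicitly: $H_\Omega$ depends on $p$ only through the $A^{*}$-invariants $z_i$, hence $\Omega$ is invariant under $e^{tA}$, $\rho$ is a first integral of $\dot x=Ax$, and $\dot\rho=-|g|\le0$; your computation that $\partial g/\partial x=(\partial^{2}\rho/\partial x^{2})B$ vanishes only along the line $\R B$ (where $g=\pm\rho(B)\ne0$), so that $\Sigma=\{g=0\}$ is a smooth cone punctured at the origin, is also right.

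The gap is in stages (b)--(c), and it is the one you yourself name. You invoke the Ambrosio extension of DiPerna--Lions for $BV$ fields, but that theory requires the divergence of $b$ to be absolutely continuous with respect to Lebesgue measure (with, say, its negative part in $L^{\infty}$). Here $\div b$ has a genuine singular part supported on $\Sigma$, and the sign of that measure, though helpful for compressibility, does not by itself restore Ambrosio's commutator/renormalization argument; the ``characteristics only coalesce'' heuristic is exactly what needs a proof, not a consequence of a citable theorem. Likewise, the Filippov step in stage (c) would need to verify that the sliding field on $\Sigma$ is well defined and locally Lipschitz, that only attracting sliding or transversal crossing occurs everywhere on $\Sigma\setminus\{0\}$, and that continuous dependence survives the cone vertex at $0$ — none of which is automatic.

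The paper closes this gap by a completely different, explicit mechanism. It regularizes twice (a smooth $s_n\to\sign$ and a frozen ball $U_\delta$ near $0$), and then exploits the gradient structure coming from the duality identity $1=\rho\,(\partial^{2}H/\partial p^{2})(\partial^{2}\rho/\partial x^{2})+(\partial\rho/\partial x)\otimes(\partial H/\partial p)$ to rewrite the singular term of the field as $\rho\alpha(x)\,\partial_x m_n(h)$ with $\alpha=\partial^{2}H/\partial p^{2}\ge0$. Differentiating the transport equation for $v$ produces for the gradient $V=(v_k)$ a singular term $\sigma_k=g_{ij}h_ih_kv_i\delta(h)$ with the key sign property $g_{kl}v_l\sigma_k=(\sum g_{kl}h_kv_l)^2\delta(h)\ge0$. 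Because $g=\rho\alpha$ is degenerate (kernel spanned by $x$), the quantity $w=\langle gV,V\rangle$ does not control $|V|^2$ by itself; the paper supplements it with $y=(Ev)^2$, $E$ the Euler operator, whose transport inequality has bounded coefficients because $F$ is a sum of homogeneous pieces. Summing gives a Gronwall estimate for $Y=w+y$, uniform in $n$, hence a Lipschitz bound on the regularized flows and a Lipschitz limit on each $V_\delta$, with continuity at $0$ handled separately via invariance of $U_\delta$. This is a direct a priori estimate tailored to the specific gradient structure of the field and avoids any appeal to general $BV$ well-posedness — which, as you correctly suspect, does not apply verbatim here.

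To repair your version you would have to convert the one-sided convexity observation into an actual renormalization/compactness proof for this particular jump discontinuity (or, equivalently, supply the Filippov sliding analysis including the vertex). The paper's degenerate-Lyapunov computation is exactly such a conversion, so the distance between the two proofs is smaller than it looks, but the step you skipped is the whole content of the theorem.
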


\begin{proof}
We confine ourselves to a proof of existence of a continuous bounded solution of the transport equation (\ref{transport}), which is obtained as a limit of classical solutions of regularized equations. 
The remaining statements can be proved by using standard techniques from \cite{diperna,ovseev_diperna}.

We use a  two-parameter approximation of the problem. 
First, we choose a parameter $n\to\infty$ such that smooth convex functions $m_n:\mathbb{R}\to\mathbb{R}$ uniformly approximate the function $x\mapsto|x|$. 
Then, the derivative $s_n=m_n'$ approximates the function $\sign(x)$ in $L_1$. 
Note that $xs_n(x)\geq0$ for any $x\in\mathbb{R}$.
Second, we choose another parameter $\delta\downarrow0$, so as to freeze the motion under system (\ref{sing_sys}) 
within the $\delta$-neighborhood $U_\delta=\{\rho(x)\leq\delta\}$ of zero with respect to the distance $\rho$. 
In other words, we approximate ODE (\ref{sing_sys}) by the nonsingular equation
\begin{equation}\label{sing_sys2}
	\dot x=Ax- Bs_n\left(\left\langle B,\frac{\partial\rho}{\partial x}\right\rangle\right)
\end{equation}
in the domain $V_\delta=\{x\in\mathbb{R}^{2N}:\rho(x)\geq\delta\}$. 
It is important that all the neighborhoods $U_\delta$ are invariant under the phase flow of (\ref{sing_sys2}) for positive times because the radius-function $\rho$ is nonincreasing 
along the phase trajectories. 
Indeed, the following inequality holds:
$$
	\dot \rho=-s_n\left(\left\langle\frac{\partial {\rho}}{\partial x},B\right\rangle\right)\left\langle\frac{\partial {\rho}}{\partial x},B\right\rangle\leq0.
$$

There is a duality relation between the support function $H$ and the function $\rho$ (for details, see \cite{ovseev1,ovseev2}):
\begin{equation}\label{rho02}
	1=\rho\frac{\partial^2 {H}}{\partial p^2}\frac{\partial^2\rho}{\partial x^2}+\frac{\partial\rho}{\partial x}\otimes\frac{\partial {H}}{\partial p}.
\end{equation}
By using relation (\ref{rho02}), we rewrite equation (\ref{sing_sys2}) in the gradient form:
$$
	Bs_n\left(\left\langle{B,\frac{\partial\rho}{\partial x}}\right\rangle\right)=\rho\alpha(x)\frac{\partial}{\partial x}m_n\left(\left\langle{B,\frac{\partial\rho}{\partial x}}\right\rangle\right)+
	xs_n\left(\left\langle{B,\frac{\partial\rho}{\partial x}}\right\rangle\right)\left\langle{B,\frac{\partial\rho}{\partial x}}\right\rangle.
$$
The latter can be regarded as an approximation to
$$
	B\,{\rm sign}\,\left\langle{B,\frac{\partial\rho}{\partial x}}\right\rangle=\rho\alpha(x)\frac{\partial}{\partial x}\left|\left\langle{B,\frac{\partial\rho}{\partial x}}\right\rangle\right|+
	x\left|\left\langle{B,\frac{\partial\rho}{\partial x}}\right\rangle\right|, \quad \alpha(x)=\frac{\partial^2 H}{\partial p^2}, \quad H=H_\Omega.
$$
In particular, the ODE takes the form:
\begin{equation}
	\dot{x}=\left\{\begin{array}{ll}
	F(x)=f(x)-g(x)\frac{\partial}{\partial x}m_n\left(h(x)\right),&\mbox{ if }x \mbox{ is in $V_\delta$},\\
 	0,&\mbox{ if }x \mbox{ is in  $U_\delta$}.
	\end{array}\right.
\end{equation}
The functions involved
$$
	f(x)=Ax-xs_n\left(\left\langle{B,\frac{\partial\rho}{\partial x}}\right\rangle\right)\left\langle{B,\frac{\partial\rho}{\partial x}}\right\rangle,
	\quad
	g=\rho\alpha,\quad h=\left\langle{B,\frac{\partial\rho}{\partial x}}\right\rangle
$$
are rather smooth: they are locally Lipschitz outside zero. 
Eqs. (\ref{ODE}) approximates Eq. (\ref{sing_sys}) rewritten in the gradient form
$$
	\dot x=F(x)=f(x)-g(x)\frac{\partial}{\partial x}|h(x)|,
	\quad
	f(x)=Ax-x\left|\left\langle{B,\frac{\partial\rho}{\partial x}}\right\rangle\right|.
$$
It is important for us that the matrix $g=\rho\alpha$ is symmetric and nonnegative. 
By omitting the subscript $n$, we find that the corresponding transport equation takes the form
$$
	\frac{\partial v}{\partial t} =f_iv_i-g_{ij}h_jv_is(h)=F_iv_i,
$$
where
$$
	v_i=\frac{\partial}{\partial x_i}v,
	\quad
	h_i=\frac{\partial}{\partial x_i}h,
	\quad
	s(h)={\rm sign}\, h,
	\quad
	F_i=f_i-g_{ij}h_js(h),
$$
and the Einstein summation notation is used. 
By differentiation, we obtain the following equations for the vector-function $V$ with components $v_k$:
\begin{equation}\label{transport_ODE_k}
	\frac{\partial v_k}{\partial t} =F_iv_{k,i}+f_{i,k}v_i -g_{ij,k}h_iv_is(h)-g_{ij}h_{jk}v_is(h)-g_{ij}h_jh_kv_i\delta(h),
\end{equation}
where $v_{k,i}=\frac{\partial v_k}{\partial x_i}$, $h_{jk}=\frac{\partial^2 h}{\partial x_j\partial x_k}$,
$g_{ij,k}=\frac{\partial g_{ij}}{\partial x_k}$, and $\delta=\delta_n$ denotes $m_n^{\prime\prime}$. 
Equation (\ref{transport_ODE_k}) is again a transport equation with extra terms $f_{i,k}v_i-g_{ij,k}h_iv_is(h)-g_{ij}h_{ik}v_is(h)-g_{ij}h_ih_kv_i\delta(h)$ in the RHS. 
Fortunately, the most ``dangerous'' and singular term $\sigma_k=g_{ij}h_ih_kv_i\delta(h)$ has a positivity property:
$$
	g_{kl}v_l\sigma_k=g_{kl}h_kv_lg_{ij}h_jv_i\delta(h)=\left(\sum g_{kl}h_kv_l\right)^2\delta(h) \mbox{ is a positive measure.}
$$

All the other terms are linear functions of $V$ with coefficients bounded outside any neighborhood of zero. 
This implies that $w=\langle{gV,V\rangle}=g_{kl}v_lv_k$ is a kind of quadratic Lyapunov function:
\begin{equation}\label{Lyapunov_ODE_k}
	\frac{\partial w}{\partial t} \leq F_iw_{i}+LW,
	\quad
	W=|V|^2=\sum v_k^2.
\end{equation}
Here $L$ is a function uniformly bounded outside any neighborhood of zero. 
Since the matrix $g=\rho\alpha$ is not strictly positive definite, 
$W$ cannot be estimated via $w$, and Eq. (\ref{Lyapunov_ODE_k}) is insufficient for establishing an a priori bound for $w$, not to mention $W$. 
Nonetheless, we can use the estimate
\begin{equation}\label{bound_g}
	W=\sum v_k^2\leq C\left(\left(\sum x_kv_k\right)^2+\langle{gV,V}\rangle\right),
\end{equation}
where $C$ is a positive function bounded outside any neighborhood of zero. 
The bound holds because the kernel of the matrix $g(x)$ is the one-dimensional subspace of the phase space, generated by $x$. 
In view of inequality (\ref{bound_g}), we have to find an estimate for $z=\sum x_kv_k=Ev,$ where $E$ is the Euler operator $Ev=\sum{x_k}\frac{\partial v}{\partial x_k} $. 
By applying the Euler operator to the transport equation (\ref{transport_ODE_k}), we obtain:
\begin{equation}\label{transport_Euler}
	\frac{\partial z}{\partial t} =F_iEv_i+(EF_i)v_i=F_iz_i-F_iv_i+(EF_i)v_i.
\end{equation}
Here we use the commutation relation
$$
	\frac{\partial }{\partial x_i}E=E\frac{\partial }{\partial x_i}+\frac{\partial}{\partial x_i}
$$
which implies that $Ev_i=z_i-v_i$. It is easy to compute $EF_i$: The function
$$
	F(x)=Ax- Bs\left\langle{B,\frac{\partial\rho}{\partial x}}\right\rangle
$$
is clearly the sum of the homogeneous functions $Ax$ and $-Bs\left\langle{B,{\partial\rho}/{\partial x}}\right\rangle$ of degrees 1 and 0. 
Therefore, $EF_i$ is a locally bounded function. 
Relation (\ref{transport_Euler}) now implies that
\begin{equation}\label{transport_Euler2}
	\frac{\partial y}{\partial t}\leq F_iy_i+C'W,
\end{equation}
where $y=z^2$, and $C'$ is a locally bounded function. 
Inequality (\ref{bound_g}) says that $W\leq C\left(y+w\right)$.
Therefore, by summing inequalities (\ref{Lyapunov_ODE_k}) and (\ref{transport_Euler2}) we obtain that
$$
	\frac{\partial Y}{\partial t} \leq F_iY_{i}+MY,
	\quad
	Y=w+y,
$$
where the function $M$ is locally bounded outside zero uniformly wrt the scale $n$.

Inequality (\ref{Lyapunov_ODE_k}) is the crucial estimate that enables us to show that the flow 
$x\mapsto\Phi_t(x)=\Phi_{n,t}(x)$ corresponding to Eq. (\ref{ODE}) is locally Lipschitz, 
and besides the corresponding Lipschitz constant does not depend on the parameter $n$. 
Therefore, 
by passing to the limit $n\to\infty$ we conclude that there exists a Lipschitz limit of $\Phi_{n,t}$, which defines the semiflow $\phi_t(x)$ of Theorem \ref{motion2} within $V_\delta$. 
Since $\delta$ is arbitrary, this proves in particular that the map $x\mapsto\phi_t(x)$ is continuous if $x\neq0$ and $\phi_t(x)\neq0$.

It is in fact obvious that the map $x\mapsto\phi_t(x)$ is continuous at zero, because the flow $\phi$ maps any neighborhood $U_\delta$ of zero into itself. 
It remains to consider the case $x\neq0,\,\phi_t(x)=0$. 
Put $\tau=\inf\{t>0:\phi_t(x)=0\}$. It suffices to show that $\phi_\tau(y)$ is close to $\phi_\tau(x)=0$ if $y$ is sufficiently close to $x$. 
We know already that for any $\epsilon>0$ the point $\phi_{\tau-\epsilon}(x)$ depends on $x$ continuously. 
On the other hand, it is obvious that the map $t\mapsto\phi_t(y)$ is uniformly Lipschitz for $y$ in a neighborhood of $x$. 
Therefore,
\begin{equation}
	|\phi_\tau(y)-\phi_\tau(x)|\leq C|\epsilon|+|\phi_{\tau-\epsilon}(y)-\phi_{\tau-\epsilon}(x)|.
\end{equation}
Since $\epsilon$ is arbitrary and $|\phi_{\tau-\epsilon}(y)-\phi_{\tau-\epsilon}(x)|$ is arbitrarily small if $y$ is sufficiently close to $x$, the continuity is proved.
\end{proof}

A similar phenomenon was discovered by I.A. Bogaevskii \cite{bogaev} for the gradient differential equations $\dot{x}=-{\partial f}/{\partial x}$, where $f$ is a nonsmooth convex function.

\section*{Conclusion}

In our work, the control for damping a system of oscillators in the generalized dry friction form was studied. 
As it customarily happens in optimal control theory, this control gives rise to differential equations with discontinuous RHS. 
In the present work, 
we demonstrated that for the case under consideration the problem of existence and uniqueness of the motion under the control can be resolved in the framework of the DiPerna--Lions theory. 
It seems important to study similar problems for the optimal control.

An interesting development of the considered problem of damping a system of oscillators is given by passage to the infinite-dimensional case. 
For example, the problem of damping a closed string under bounded load applied to a fixed point leads to nontrivial issues closely related to the above discussed ones.

\section*{Acknowledgements}

This work is supported by the Russian Foundation for Basic Research (projects 14-08-00606 and 14-01-00476).

\end{document}